\documentclass{article}

\usepackage[latin9]{inputenc}
\usepackage{color}
\usepackage{amsmath,amstext,amsthm}
\usepackage[colorlinks=true,citecolor=blue]{hyperref}
\usepackage[square,numbers,sort,compress]{natbib}
\usepackage{amssymb}
\usepackage{graphicx}
\usepackage{comment,sidecap}
\usepackage{esint}
\PassOptionsToPackage{normalem}{ulem}
\usepackage{ulem}
\usepackage[title]{appendix}
\usepackage{framed}
\usepackage{bookmark}

\makeatletter
  \theoremstyle{plain}
  \newtheorem{lem}{\protect\lemmaname}
\theoremstyle{plain}
\newtheorem{thm}{\protect\theoremname}
  \theoremstyle{remark}
  \newtheorem{rem}{\protect\remarkname}

\@ifundefined{date}{}{\date{}}

\newtheorem{example}{Example}

\newcommand{\id}{\mbox{d}}

\newcommand{\St}{\mbox{St}}
\newcommand{\Rey}{\mbox{Re}}
\newcommand{\e}{\mbox{e}}

\setlength{\evensidemargin}{0in}
\setlength{\oddsidemargin}{.2in}
\setlength{\textwidth}{6.in}
\setlength{\topmargin}{-.5in}
\setlength{\textheight}{8.7in}
\setlength{\unitlength}{1cm}

\makeatother

\providecommand{\lemmaname}{Lemma}
\providecommand{\remarkname}{Remark}
\providecommand{\theoremname}{Theorem}

\begin{document}
\title{The Maxey--Riley Equation: Existence, Uniqueness and Regularity of
Solutions}

\author{Mohammad Farazmand$^{1,2}$\footnote{Corresponding author; email address: farazmam@ethz.ch}, George Haller$^{2}$\\
 {\small $^{1}$Department of Mathematics, ETH Zurich, Rämistrasse
101, 8092 Zurich Switzerland} \\
 {\small $^{2}$Institute of Mechanical Systems, ETH Zurich, Tannenstrasse 3, 8092 Zurich, Switzerland}}
\maketitle
\begin{abstract}
The Maxey--Riley equation describes the motion of an inertial (i.e.,
finite-size) spherical particle in an ambient fluid flow. The equation
is a second-order, implicit integro-differential equation with a singular
kernel, and with a forcing term that blows up at the initial time.
Despite the widespread use of the equation in applications, the basic
properties of its solutions have remained unexplored. Here we fill
this gap by proving local existence and uniqueness of mild solutions.
For certain initial velocities between the particle and the fluid,
the results extend to strong solutions. We also prove continuous differentiability
of the mild and strong solutions with respect to their initial conditions.
This justifies the search for coherent structures in inertial flows
using the Cauchy--Green strain tensor.
\end{abstract}

\section{Introduction}

The Maxey--Riley equation \cite{Tchen,IP_corrsin,MR} describes the motion
of a small but finite-sized rigid sphere through a fluid. The equation
is widely used to study the motion of a finite-size (or inertial)
particle immersed in a non-uniform fluid. The behavior of such particles
is of interest in various environmental and engineering problems,
e.g., clustering of garbage patches in the oceans \cite{garbagePatch}
and dispersion of airborne pollutants \cite{airPollution}.

A first attempt to derive the equation of motion of an inertial particle in a non-uniform flow appears in \cite{Tchen}. \citet{Tchen} wrote the Basset--Boussinesq-Oseen equation (governing the motion of a small spherical particle in a quiescent fluid \cite{boussinesq,basset2,oseen1927}) in a frame co-moving with a fluid parcel in an unsteady flow, accounting for various forces that arise in such non-inertial frames. Later, the exact form of the forces exerted on the particle were debated and corrected by several authors \cite[see, e.g.,][]{IP_corrsin,MR,auton1988}. Today, the most widely accepted form of the equations is the Maxey--Riley (MR) equation \cite{MR} with the corrections due to \citet{auton1988} and \citet{MR_initCond}.

To recall the exact form of the MR equation, we let $u:\mathcal{D}\times\mathbb{R}^{+}\rightarrow\mathbb{R}^{n}$
denote the velocity field describing the flow of a fluid in an open
spatial domain $\mathcal{D}\subseteq\mathbb{R}^{n}$, where $n=2$
or $n=3$ for two- or three-dimensional flows, respectively. A fluid
trajectory is then the solution of the differential equation $\dot{x}=u(x,t)$
with some initial condition $x(t_0)=x_{0}$. A spherical inertial particle,
however, follows a different trajectory $y(t)\in\mathcal{D}$, which
satisfies the MR equation 
\begin{align}
\nonumber \\
\ddot{y}= & \,\frac{R}{2}\frac{D}{Dt}\left(3u(y,t)+\frac{\gamma}{10}\mu^{-1}\Delta u(y,t)\right)+\left(1-\frac{3R}{2}\right)g\nonumber \\
\  & -\mu\left(\dot{y}-u(y,t)-\frac{\gamma}{6}\mu^{-1}\Delta u(y,t)\right)\nonumber \\
 & -\kappa\mu^{1/2}\left\{\int_{t_0}^{t}\frac{\dot w(s)}{\sqrt{t-s}}\id s+\frac{w(t_0)}{\sqrt{t-t_0}} \right\},\label{eq:MR_dimless0}
\end{align}
where 
\begin{equation}
w(t)=\dot{y}(t)-u(y(t),t)-\frac{\gamma}{6}\mu^{-1}\Delta u(y(t),t).
\label{eq:w}
\end{equation}
The initial conditions for the inertial particle are given as $y(t_0)=y_{0}$
and $\dot y(t_0)=v_{0}$. The material derivative $\frac{D}{Dt}\doteq\partial_{t}+u\cdot\nabla$
denotes the time derivative along a fluid trajectory.

All the variables and parameters in equations \eqref{eq:MR_dimless0} and \eqref{eq:w} are dimensionless, nondimensionalized by characteristic length scale $L$, characteristic velocity $U$ and characteristic time scale $T=L/U$ of the fluid flow. The dimensionless parameters are 
\begin{equation}
R=\frac{2\rho_{f}}{\rho_{f}+2\rho_{p}},\ \mu=\frac{R}{St},\ \kappa=\sqrt{\frac{9R}{2\pi}},\ \gamma=\frac{9R}{2\Rey},
\end{equation}
where $\rho_{f}$ and $\rho_{p}$ are the density of the fluid and
the particle, respectively. The constant, dimensionless vector of gravity is denoted by
$g$. The Stokes ($\St$) and Reynolds ($\Rey$) numbers are defined
as 
\begin{equation}
\St=\frac{2}{9}\left(\frac{a}{L}\right)^{2}\Rey,\ \ \ \Rey=\frac{UL}{\nu},
\end{equation}
where $a$ is the radius of the particle and $\nu$ denotes the kinematic viscosity of the fluid.

Equation \eqref{eq:MR_dimless0} is a system of nonlinear, fractional-order differential equations. The fractional order is due to the \emph{memory term}
\begin{equation}
\frac{\id}{\id t}\int_{t_0}^{t}\frac{w(s)}{\sqrt{t-s}}\id s=\int_{t_0}^{t}\frac{\dot{w}(s)}{\sqrt{t-s}}\id s+\frac{w(t_0)}{\sqrt{t-t_0}} \label{eq:RLid}
\end{equation}
where the identity is obtained by subsequent differentiation and integration-by-part \cite[see, e.g.,][]{FDE_podlubny}.
The memory term is a fractional derivative of order $1/2$ in the Riemann--Liouville sense \cite{FDE_podlubny,Daitche2013}. Physically, it represents the Basset--Boussinesq force \cite{boussinesq,basset1,basset2} resulting from the lagging boundary
layer development around the particle, as it moves through the fluid \citep{MR}. 

In the original derivation of the MR equation \cite{MR}, it is implicitly
assumed that the initial velocity of the particle $v_{0}$ is such
that $w(t_0)=0$ holds. Equation \eqref{eq:MR_dimless0}, however, is
the most general form of the MR equation which was derived later \cite{MR_initCond}
and allows for a general initial particle velocity $v_{0}$.

Without the memory term and for $w(t_0)=0$, the MR equation is an ordinary
differential equation, whose solutions are well known to be regular
for any smooth ambient velocity field $u(x,t)$. The memory term,
however, introduces complications in the analysis and numerical solution
of the equation.  It contains an implicit
term through the integral with an integrand depending on the particle acceleration $\ddot y$. 
Because of its implicit nature, it is not a priori
clear if the MR differential equation defines a dynamical system,
i.e., a process with a well-defined flow map.  

Furthermore, when nonzero, the unbounded term $w(t_0)/\sqrt{t-t_0}$ further complicates
equation \eqref{eq:MR_dimless0}, imparting an instantaneously infinite
force at the initial time. This term is often ignored for
convenience, even though its omission imposes a special constraint
on the initial particle velocity that is hard to justify physically \cite{MR_initCond}.

For the above reasons, the memory term has routinely been neglected
in studies of inertial particle dynamics (see, e.g., \citet{IP_maxey87,IP_babiano,IP_haller08}),
until recent studies demonstrated convincingly the quantitative and qualitative importance of the memory term (see, e.g., \citep{IP_candelier,toegel2006,gabrin2009} for experimental and \citep{IP_tel,guseva2013influence,Daitche_NJP} for numerical studies.).

In addition to theoretical difficulties, the memory term also complicates the numerical treatment of the full MR equation.
This equation is certainly not solvable with standard numerical schemes such as Runge--Kutta
algorithms. To this end, involved schemes have been developed for
numerical treatment of the memory term (see \citet{Daitche2013} and
references therein).

All these numerical schemes implicitly assume the existence and uniqueness
of solutions of the MR equation. The solutions can indeed be found
explicitly for certain simple velocity fields \cite{MR_exactSol,IP_candelier}. 
To the best of our knowledge, however, general
existence and uniqueness results have not been proven, and cannot
be directly concluded from existing results on broader classes of
evolution equations (see \cite{Burton2013,Burton2012,FDE_petras,FDE_podlubny} for
related but not applicable results on integro- and fractional-order differential equations).
In the absence of such results, the existence and regularity of solutions
for a nonlinear system of fractional-order differential equation,
such as the MR equation, is far from obvious. 

Here, we present the first proof of local existence and uniqueness
of mild solutions to the full MR equation. The solutions become classical
(strong) solutions to \eqref{eq:MR_dimless0} for initial conditions
satisfying $w(t_0)=0.$ Moreover, we show that both the mild and the strong solutions are continuously differentiable with respect to their initial conditions. As a consequence,  coherent-structure detection methods utilizing  the derivative of the flow map in the absence of the memory term  \cite{sapsis2009inertial} can also be employed in the present, more general context.

We start with re-writing the MR equation as a system of differential equations (see Eq. \eqref{eq:MR_compact} below) in terms of the particle position $y$ and the function $w$ defined in \eqref{eq:w}. Multi-dimensional reformulations of the MR equation have appeared before \cite{MR_exactSol,sapsis2011IP,Daitche2013} but remained inaccessible to general mathematical analysis due to the implicit dependence of their right-hand sides on $\dot y$.

Our formulation turns the MR equation into a nonlinear system of fractional-order differential equations in terms of $y$ and $w$. The standard techniques for the proof of existence and uniqueness of solutions of such equations assume Lipschitz continuity of the right hand side with respect to the $(y,w)$ variable \cite{FDE_petras,FDE_podlubny}. This assumption fails for the MR equation (see the term  $M_u(y,t)w$ in Eq. \eqref{eq:MR_compact} below). Therefore, as discussed in Section \S\ref{sec:EU}, modifications to the standard function spaces, estimates and assumptions are required.

\section{Preliminaries}

We start by letting the velocity of the inertial particle be $v:\mathbb{R}^{+}\rightarrow\mathbb{R}^{n}$
, and use this notation to rewrite \eqref{eq:MR_dimless0} as a
first-order system of equations
\begin{align}
\dot{y}= & \; v\nonumber\\
\dot{v}= & \, R\frac{Du}{Dt}+\left(1-\frac{3R}{2}\right)g+\frac{R}{2}\frac{D}{Dt}\left(u+\frac{\gamma}{10}\mu^{-1}\Delta u\right)\nonumber \\
\  & -\mu\left(v-u-\frac{\gamma}{6}\mu^{-1}\Delta u\right)\nonumber \\
 & -\kappa\mu^{1/2}\frac{\id}{\id t} \int_{t_0}^{t}\frac{w(s)}{\sqrt{t-s}}\id s ,\label{eq:MR_dimless}
\end{align}
with the function $w(t)$ defined as in \eqref{eq:w}. The memory term is written as a fractional-order derivative through identity \eqref{eq:RLid}. As earlier,
the material derivative $\frac{D}{Dt}\doteq\partial_{t}+u\cdot\nabla$
denotes a time derivative along a fluid trajectory. Also $\frac{d}{dt}\doteq\partial_{t}+v\cdot\nabla$
denotes temporal differentiation along the inertial trajectory $y(t)$.
The two derivatives are related by the identity 
\begin{equation}
\frac{d}{dt}=\frac{D}{Dt}+(v-u)\cdot\nabla.\label{eq:ident}
\end{equation}
For notational simplicity, we will also use the dot symbol for the
derivative $\frac{d}{dt}$.

We rewrite (\ref{eq:MR_dimless}) in the more compact form 
\begin{align}
\dot{y}= & \; w+A_{u}(y,t),\nonumber\\
\dot{w}= & \,-\mu\, w-M_{u}(y,t)w-\kappa\mu^{1/2}\frac{\id}{\id t}\int_{t_0}^{t}\frac{w(s)}{\sqrt{t-s}}\id s+B_{u}(y,t),
\label{eq:MR_compact}
\end{align}
where 
\begin{align*}
A_{u}= & \; u+\frac{\gamma}{6}\mu^{-1}\Delta u,\\
B_{u}= & \left(\frac{3R}{2}-1\right)\left(\frac{Du}{Dt}-g\right)+\left(\frac{R}{20}-\frac{1}{6}\right)\gamma\mu^{-1}\frac{D}{Dt}\Delta u\\
\  & -\frac{\gamma}{6}\mu^{-1}\left[\nabla u+\frac{\gamma}{6}\mu^{-1}\nabla\Delta u\right]\Delta u,\\
M_{u}= & \nabla u+\frac{\gamma}{6}\mu^{-1}\nabla\Delta u,
\end{align*}
are known functions in terms of the fluid velocity $u$. The terms
$A_{u},\ B_{u}:\mathcal{D}\times R^{+}\rightarrow\mathbb{R}^{n}$
represent vector fields while $M_{u}:\mathcal{D}\times\mathbb{R}^{+}\rightarrow\mathbb{R}^{n\times n}$
is a tensor field. Note that equation (\ref{eq:MR_compact}) is linear
in $w$ and, for a typical fluid velocity field $u$, non-linear in
$y$. The corresponding initial conditions for \eqref{eq:MR_compact}
are $y(t_0)=y_{0}$ and $w(t_0)=w_{0}:=v_{0}-u(y_{0},t_0)-\frac{\gamma}{6}\mu^{-1}\Delta u(y_{0},t_0)$.

\section{Local existence and uniqueness}\label{sec:EU}

\subsection{Approach}

This section is devoted to proving the local existence and uniqueness
of solutions of \eqref{eq:MR_compact} under certain smoothness assumptions
on the fluid velocity field $u$.

Integrating equation \eqref{eq:MR_compact} formally, one obtains
\begin{align}
y(t)= & \; y_{0}+\int_{t_0}^{t}\big[w(s)+A_{u}(y(s),s)\big]\id s,\nonumber\\
w(t)= & \; w_{0}+\int_{t_0}^{t}\left[-\mu\, w(s)-M_{u}(y(s),s)w(s)-\kappa\mu^{1/2}\frac{w(s)}{\sqrt{t-s}}+B_{u}(y(s),s)\right]\id s,\label{eq:yw}
\end{align}
where, for notational simplicity, we have omitted the dependence of
$y$ and $w$ on $y_{0}$ and $w_{0}$. A \emph{mild solution} of
the MR equation is a function $(y(t),w(t))$ that satisfies the integral
equation \eqref{eq:yw}. The same solution is also a \emph{strong solution}
if it is smooth enough to also satisfy the differential form \eqref{eq:MR_compact}
of the MR equation.

Equation \eqref{eq:yw} can be viewed as a fixed point problem for
the map 
\begin{equation}
(P\Phi)(t)=\begin{pmatrix}y_{0}+\int_{t_0}^{t}\big[\eta(s)+A_{u}(\xi(s),s)\big]\id s\ \vspace{.22cm} \\ 
w_{0}+\int_{t_0}^{t}\left[-\left(\mu+\frac{\kappa\mu^{1/2}}{\sqrt{t-s}}+M_{u}(\xi(s),s)\right)\eta(s)+B_{u}(\xi(s),s)\right]\id s
\end{pmatrix},\label{eq:map_P}
\end{equation}
where $\Phi=(\xi,\eta)\in\mathbb{R}^{2n}$. We will establish the
existence of mild solutions to the MR equations by showing that $P$
has a unique fixed point on an appropriate function space under general
regularity assumptions on the fluid velocity $u$.

\subsection{Set-up}

We will use $|\cdot|$ to denote the Euclidean norm on $\mathbb{R}^{m}$ with $m\in\{n,2n\}$.
The induced operator norm of a square matrix acting on $\mathbb{R}^{m}$
is denoted by $\|\cdot\|$. For continuous functions defined on $\mathbb{R}^{m}$, 
we denote the supremum norm by $\|\cdot\|_{\infty}$.

Let $X_{T,K}$ denote the set of continuous functions mapping from
the interval $[t_0,t_0+T]$ into $\mathbb{R}^{m}$ that are uniformly bounded
by the constant $K>0$: 
\begin{equation}
X_{T,K}:=\{f\in C([t_0,t_0+T];\mathbb{R}^{m}):\;\|f\|_{\infty}\leq K\}.
\end{equation}
Since $(C([t_0,t_0+T];\mathbb{R}^{m}),\|\cdot\|_{\infty})$ is a Banach
space, the space $(X_{T,K},\|\cdot\|_{\infty})$ is a complete metric
space, for $X_{T,K}$ is a closed subset of $C([t_0,t_0+T];\mathbb{R}^{m})$. 

First, we would like to show that $P$ defined in \eqref{eq:map_P}
maps $X_{T,K}$ into itself. To this end, we need the following assumption.
\begin{description}
\item [{(H1)}] The velocity field $u(x,t)$ is three times continuously
differentiable in its arguments over the domain $\mathcal{D}\times\mathbb{R}^{+}$,
and its partial derivatives (including mixed partials) are uniformly
bounded and Lipschitz continuous up to order three.
\end{description}

\subsection{Existence and uniqueness of solutions}
Under assumption (H1), we obtain the following result:
\begin{lem}
Assume that (H1) holds. Then for any $y_{0}\in\mathcal{D}$ and $w_{0}\in\mathbb{R}^{n}$,
there exist $K>0$ large enough and $\delta>0$ small enough, such
that, for any $T\in[0,\delta]$, we have $P\colon X_{T,K}\rightarrow X_{T,K}$
.\label{lemma:selfMapping} \end{lem}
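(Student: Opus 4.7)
The plan is to prove the self-mapping property by a direct triangle-inequality estimate on the explicit integral formula \eqref{eq:map_P}, using assumption (H1) to get uniform $L^\infty$ bounds on the coefficients $A_u, B_u, M_u$, and then choosing $K$ large enough to absorb the initial data $(y_0,w_0)$ and $\delta$ small enough to absorb the $T$- and $\sqrt{T}$-dependent contributions. The only genuine subtlety is the singular kernel $(t-s)^{-1/2}$, which is integrable in $s$ but produces a $\sqrt{T}$-order rather than $T$-order bound.

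First, under (H1), the fields $A_u$, $B_u$, and $M_u$ defined after \eqref{eq:MR_compact} are polynomials in $u$, its partials up to order three, and the material derivatives $Du/Dt=\partial_{t}u+u\cdot\nabla u$ and $D\Delta u/Dt$, which are themselves bounded for the same reason; hence $C_A:=\|A_u\|_\infty$, $C_B:=\|B_u\|_\infty$, $C_M:=\|M_u\|_\infty$ are all finite. I then fix any $K\geq 2(|y_0|+|w_0|)+1$, so that the initial data sits strictly inside the ball of radius $K$.

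Next, for $\Phi=(\xi,\eta)\in X_{T,K}$ and $t\in[t_0,t_0+T]$, applying the triangle inequality to each component of \eqref{eq:map_P}, together with the identity $\int_{t_0}^{t}(t-s)^{-1/2}\,\id s = 2\sqrt{t-t_0}\leq 2\sqrt{T}$, yields
\begin{align*}
|(P\Phi)_1(t)| &\leq |y_0| + (K+C_A)\,T,\\
|(P\Phi)_2(t)| &\leq |w_0| + (\mu+C_M)K\,T + 2\kappa\mu^{1/2}K\sqrt{T} + C_B\,T.
\end{align*}
Using $\sqrt{a^2+b^2}\leq a+b$ for the Euclidean norm on $\mathbb{R}^{2n}$ and taking the supremum over $t$, one obtains
$$\|P\Phi\|_\infty \leq (|y_0|+|w_0|) + \alpha(K)\,T + \beta\,K\sqrt{T},$$
with $\alpha(K),\beta$ depending only on $K,\mu,\kappa,C_A,C_B,C_M$. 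Since both $T$ and $\sqrt{T}$ tend to zero as $T\to 0^{+}$, I pick $\delta>0$ small enough that $\alpha(K)\,T+\beta K\sqrt{T}\leq K-(|y_0|+|w_0|)$ for every $T\in(0,\delta]$; this forces $\|P\Phi\|_\infty\leq K$. Continuity of $P\Phi$ on $[t_0,t_0+T]$ (including at $t=t_0$, where the singular integral vanishes) follows from continuity of $\xi,\eta$ and integrability of the kernel, so $P\Phi\in X_{T,K}$.

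The main obstacle, modest in this lemma but central to the subsequent contraction argument, is the singular term $(t-s)^{-1/2}$: unlike the other contributions, it only produces an $O(\sqrt{T})$ bound, so the required $\delta$ is dictated by the weaker $\sqrt{T}$ scaling. The same $\sqrt{T}$ scaling will reappear when one later estimates $\|P\Phi-P\tilde{\Phi}\|_\infty$ to verify the contraction hypothesis on $X_{T,K}$, and it will ultimately control the maximal existence time of the mild solution.
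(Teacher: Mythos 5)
Your argument is correct and essentially identical to the paper's proof: both bound $A_u$, $B_u$, $M_u$ uniformly via (H1), apply the triangle inequality to \eqref{eq:map_P} using $\int_{t_0}^{t}(t-s)^{-1/2}\,\id s=2\sqrt{t-t_0}$, and then choose $K$ to dominate the initial data and $\delta$ to make the $O(T)+O(\sqrt{T})$ terms small. The only differences are cosmetic (your $K\geq 2(|y_0|+|w_0|)+1$ versus the paper's $K=4\max\{|y_0|,|w_0|\}$, and separate constants $C_A,C_B,C_M$ in place of the single $L_b$).
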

\begin{proof}
Under assumption (H1), the vector fields $A_{u},B_{u}:\mathcal{D}\times\mathbb{R}^{+}\rightarrow\mathbb{R}^{n}$
and the tensor field $M_{u}:\mathcal{D}\times\mathbb{R}^{+}\rightarrow\mathbb{R}^{n\times n}$
are continuous and uniformly bounded. Specifically, there exists a
constant $L_{b}>0$ such that 
\[
\|A_{u}\|_{\infty},\|B_{u}\|_{\infty},\|M_{u}\|_{\infty}\leq L_{b}.
\]
Then, based on eq. \eqref{eq:map_P}, the quantity $P\Phi$ satisfies
the estimate 
\begin{align*}
|P\Phi(t)|\leq & \|y_{0}+\int_{t_0}^{t}\big[\eta(s)+A_{u}(\xi(s),s)\big]\id s\|_{\infty}\\
 & +\|w_{0}+\int_{t_0}^{t}\left[\left(\mu+\frac{\kappa\mu^{1/2}}{\sqrt{t-s}}+M_{u}(\xi(s),s)\right)\eta(s)+B_{u}(\xi(s),s)\right]\id s\|_{\infty}\\
\leq & |y_{0}|+|w_{0}|+\|\eta\|_{\infty}\left((t-t_0)+\mu (t-t_0)+2\kappa\mu^{1/2}\sqrt{t-t_0}+L_{b}(t-t_0)\right)+2L_{b}(t-t_0)\\
\leq & |y_{0}|+|w_{0}|+\|\Phi\|_{\infty}\left((t-t_0)+\mu(t-t_0)+2\kappa\mu^{1/2}\sqrt{t-t_0}+L_{b}(t-t_0)\right)+2L_{b}(t-t_0).
\end{align*}
Now take $K=4\max\{|y_{0}|,|w_{0}|\}$ and $\delta>0$ small enough
such that 
\[
\delta+\mu\delta+2\kappa\mu^{1/2}\sqrt{\delta}+L_{b}\delta<\frac{1}{4},\ \ \ 2L_{b}\delta<\frac{K}{4}.
\]
Then, for any $T\in[0,\delta]$, $\|P\Phi\|_{\infty}\leq K$ given
that $\Phi\in X_{T,K}$. The continuity of $P\Phi:[t_0,t_0+T]\rightarrow\mathbb{R}^{2n}$
follows from assumption (H1) after one notes that, for $\eta\in X_{T,K}$,
the term $\int_{t_0}^{t}\frac{\eta(s)}{\sqrt{t-s}}\id s$ in \eqref{eq:map_P}
is continuous in $t$. 
\end{proof}
We establish the existence of a unique solution to \eqref{eq:yw}
by proving that $P$ is a contraction mapping on $X_{T,K}$ and hence
has a unique fixed point. 
\begin{lem}
\label{lemma: contraction}Assume that (H1) holds. Then there exists
$\delta>0$ such that, for any $T\in[0,\delta]$ and $\Phi_{1},\Phi_{2}\in X_{T,K}$,
\[
\|P\Phi_{1}-P\Phi_{2}\|_{\infty}\leq\frac{1}{2}\|\Phi_{1}-\Phi_{2}\|_{\infty}
\]
\end{lem}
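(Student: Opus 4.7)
The plan is to estimate $|P\Phi_1(t)-P\Phi_2(t)|$ componentwise and show that all the resulting coefficients vanish as $\delta \to 0$. Write $\Phi_j=(\xi_j,\eta_j)$ for $j=1,2$, and note that under (H1) the maps $A_u(\cdot,t)$, $B_u(\cdot,t)$ and $M_u(\cdot,t)$ are not only uniformly bounded by $L_b$ (as used in Lemma \ref{lemma:selfMapping}), but also uniformly Lipschitz in the spatial variable; let $L_u$ denote a common Lipschitz constant. Both these facts are immediate from (H1), since bounded and Lipschitz third partial derivatives of $u$ imply bounded and Lipschitz first and second partials, which is exactly what is needed for $A_u$, $B_u$, $M_u$.

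For the first component of $P\Phi_1-P\Phi_2$, I would simply bound
\[
\Bigl|\int_{t_0}^{t}\bigl[(\eta_1-\eta_2)(s)+A_u(\xi_1(s),s)-A_u(\xi_2(s),s)\bigr]\,\id s\Bigr|\le (1+L_u)(t-t_0)\,\|\Phi_1-\Phi_2\|_\infty.
\]
For the second component, the only genuinely nontrivial term is the bilinear one $M_u(\xi,s)\eta(s)$, because it is not Lipschitz in $(\xi,\eta)$ globally. I would handle it by the standard add-and-subtract trick,
\[
M_u(\xi_1,s)\eta_1(s)-M_u(\xi_2,s)\eta_2(s)=M_u(\xi_1,s)(\eta_1-\eta_2)(s)+\bigl(M_u(\xi_1,s)-M_u(\xi_2,s)\bigr)\eta_2(s),
\]
using $\|M_u\|_\infty\le L_b$ on the first piece and $\|\eta_2\|_\infty\le K$ (since $\Phi_2\in X_{T,K}$) together with the Lipschitz bound on $M_u$ on the second piece. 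The same strategy applies to the $B_u$ and $\mu\eta$ terms. The singular kernel $1/\sqrt{t-s}$ only multiplies $\eta_1-\eta_2$ (not $\xi$), so it contributes the integrable factor
\[
\int_{t_0}^{t}\frac{\id s}{\sqrt{t-s}}=2\sqrt{t-t_0}\le 2\sqrt{\delta}.
\]
Collecting everything yields
\[
\|P\Phi_1-P\Phi_2\|_\infty \le C(\delta,K)\,\|\Phi_1-\Phi_2\|_\infty,
\]
where $C(\delta,K)=(1+L_u)\delta+\mu\delta+2\kappa\mu^{1/2}\sqrt{\delta}+L_b\delta+L_u K\delta$, plus small lower-order contributions from the $B_u$ comparison. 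Crucially, $C(\delta,K)\to 0$ as $\delta\to 0$ with $K$ fixed at the value chosen in Lemma \ref{lemma:selfMapping}, so one can shrink $\delta$ further (if necessary, beyond the choice made in that lemma) to force $C(\delta,K)\le 1/2$.

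The main subtlety, and the reason the proof is not a one-line application of Grönwall-style arguments, is precisely the bilinear term $M_u(\xi,s)\eta(s)$: it prevents the right-hand side of \eqref{eq:MR_compact} from being globally Lipschitz in $(y,w)$, so the standard existence/uniqueness theory for fractional-order equations does not directly apply (as noted in the Introduction). The trick is that once one works on the bounded set $X_{T,K}$, the factor $\|\eta_2\|_\infty\le K$ turns the bilinear estimate into a linear one with constant proportional to $K$, paying for the non-Lipschitz structure by the extra factor of $\delta$. The singular kernel is only a minor nuisance because its integral is finite and in fact contributes a $\sqrt{\delta}$ (rather than $\delta$) factor, which still vanishes as $\delta\to 0$.
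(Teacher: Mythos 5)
Your proposal is correct and follows essentially the same route as the paper's own proof: the same add-and-subtract decomposition of the bilinear term $M_u(\xi_1,s)\eta_1(s)-M_u(\xi_2,s)\eta_2(s)$, the same use of the uniform bound $L_b$ and the spatial Lipschitz constant together with $\|\eta_2\|_\infty\leq K$, and the same observation that the singular kernel only contributes an integrable factor $2\sqrt{t-t_0}$. The only difference is bookkeeping: the paper keeps the $\|\xi_1-\xi_2\|_\infty$ and $\|\eta_1-\eta_2\|_\infty$ coefficients separate and bounds each by $1/4$ before invoking $\|\xi\|_\infty+\|\eta\|_\infty\leq 2\|\Phi\|_\infty$, whereas you fold everything into a single constant $C(\delta,K)\to 0$.
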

\begin{proof}
Note that as a direct consequence of assumption (H1), the maps $A_{u}(\cdot,t),B_{u}(\cdot,t):\mathcal{D}\rightarrow\mathbb{R}^{n}$
and $M_{u}(\cdot,t):\mathcal{D}\rightarrow\mathbb{R}^{n\times n}$
are Lipschitz continuous, uniformly in time, i.e., there is a constant
$L_{c}>0$ such that, for any $t\in[t_0,t_0+T]$ and $y_{1},y_{2}\in\mathcal{D}$,
\[
|A_{u}(y_{1},t)-A_{u}(y_{2},t)|\leq L_{c}|y_{1}-y_{2}|,
\]
\[
|B_{u}(y_{1},t)-B_{u}(y_{2},t)|\leq L_{c}|y_{1}-y_{2}|,
\]
\begin{equation}
\|M_{u}(y_{1},t)-M_{u}(y_{2},t)\|\leq L_{c}|y_{1}-y_{2}|.\label{eq:H2}
\end{equation}

Let $\Phi_{1},\Phi_{2}\in X_{T,K}$, where $\Phi_{i}=(\xi_{i},\eta_{i})$.
Using the above inequalities, we have 
\begin{align*}
|(P\Phi_{1})(t)-(P\Phi_{2})(t)|\leq & \int_{t_0}^{t}\big(|\eta_{1}(s)-\eta_{2}(s)|+|A_{u}(\xi_{1}(s),s)-A_{u}(\xi_{2}(s),s)|\big)\id s+\\
 & \int_{t_0}^{t}\Big[\mu|\eta_{1}(s)-\eta_{2}(s)|+\kappa\mu^{1/2}\frac{|\eta_{1}(s)-\eta_{2}(s)|}{\sqrt{t-s}}+\\
 & \ \ \ \ \ \ \ \ |M_{u}(\xi_{1}(s),s)\eta_{1}(s)-M_{u}(\xi_{2}(s),s)\eta_{2}(s)|+\\
 & \ \ \ \ \ \ \ \ |B_{u}(\xi_{1}(s),s)-B_{u}(\xi_{2}(s),s)|\Big]\id s\\
\leq & \left[(t-t_0)+\mu (t-t_0)+2\kappa\mu^{1/2}\sqrt{t-t_0}\right]\|\eta_{1}-\eta_{2}\|_{\infty}+\\
 & 2L_{c}(t-t_0)\|\xi_{1}-\xi_{2}\|_{\infty}+\\
 & \int_{t_0}^{t}\big[|M_{u}(\xi_{1}(s),s)\eta_{1}(s)-M_{u}(\xi_{1}(s),s)\eta_{2}(s)|+\\
 & \ \ \ \ \ \ \ |M_{u}(\xi_{1}(s),s)\eta_{2}(s)-M_{u}(\xi_{2}(s),s)\eta_{2}(s)|\big]\id s\\
\leq & \left[(t-t_0)+\mu (t-t_0)+2\kappa\mu^{1/2}\sqrt{t-t_0}+L_{b}(t-t_0)\right]\|\eta_{1}-\eta_{2}\|_{\infty}+\\
     & (2+K)L_{c}(t-t_0)\|\xi_{1}-\xi_{2}\|_{\infty},
\end{align*}
where we used the fact that 
\begin{align*}
\int_{t_0}^{t}\Big|M_{u}(\xi_{1}(s),s)\eta_{1}(s)-M_{u}(\xi_{2}(s),s)\eta_{2}(s)\Big|\id s= & \ \\
\int_{t_0}^{t}\Big|M_{u}(\xi_{1}(s),s)(\eta_{1}(s)-\eta_{2}(s))+ & \left(M_{u}(\xi_{1}(s),s)-M_{u}(\xi_{2}(s),s)\right)\eta_{2}(s)\Big|\id s\\
\leq\int_{t_0}^{t}\|M_{u}(\xi_{1}(s),s)\||\eta_{1}(s)-\eta_{2}(s)|\id s+ & \int_{t_0}^{t}\|M_{u}(\xi_{1}(s),s)-M_{u}(\xi_{2}(s),s)\||\eta_{2}(s)|\id s\\
\leq (t-t_0)L_{b}\|\eta_{1}-\eta_{2}\|_{\infty}+ & (t-t_0)L_{c}\|\eta_{2}\|_{\infty}\|\xi_{1}-\xi_{2}\|_{\infty}.
\end{align*}
Therefore, one can take $\delta>0$ small enough such that, for any
$t\in[t_0,t_0+\delta]$, 
\begin{align*}
|(P\Phi_{1})(t)-(P\Phi_{2})(t)|\leq & \frac{1}{4}\left(\|\xi_{1}-\xi_{2}\|_{\infty}+\|\eta_{1}-\eta_{2}\|_{\infty}\right)\\
\leq & \frac{1}{2}\|\Phi_{1}-\Phi_{2}\|_{\infty}.
\end{align*}
Here, for the last inequality, we have used the fact that $\|\xi\|_{\infty}+\|\eta\|_{\infty}<2\|\Phi\|_{\infty}$.
Hence, we obtain the contraction property 
\[
\|P\Phi_{1}-P\Phi_{2}\|_{\infty}\leq\frac{1}{2}\|\Phi_{1}-\Phi_{2}\|_{\infty},
\]
 as claimed.
\end{proof}
Lemma \ref{lemma: contraction} leads to our main existence result.
\begin{thm}
\label{thm:Local-existence}{[}Local existence of mild solutions{]}
Assume that (H1) holds. Then for any initial condition $\left(y_{0},w_{0}\right)\in\mathcal{D\times\mathbb{R}}^{n}$,
there exists $\delta>0$ such that over the time interval $[t_0,t_0+\delta]$,
the integral equation \eqref{eq:yw} has a unique solution $(y(t),w(t))$
with $\left(y(t_0),w(t_0)\right)=\left(y_{0},w_{0}\right)$. As consequence,
the function $y(t)$ is a mild solution of the original form \eqref{eq:MR_dimless0}
of the Maxey--Riley equation.\end{thm}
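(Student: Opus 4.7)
The plan is to deduce the theorem as a direct application of the Banach fixed-point theorem, using Lemma \ref{lemma:selfMapping} and Lemma \ref{lemma: contraction} as the two hypotheses of that theorem. First I would fix the initial data $y_0 \in \mathcal{D}$, $w_0 \in \mathbb{R}^n$, and choose $K = 4\max\{|y_0|,|w_0|\}$ as in the proof of Lemma \ref{lemma:selfMapping}. Then I would take $\delta > 0$ to be the minimum of the two $\delta$'s produced by Lemma \ref{lemma:selfMapping} and Lemma \ref{lemma: contraction}, so that for every $T \in [0,\delta]$ the map $P$ both sends $X_{T,K}$ into itself and satisfies the contraction estimate with constant $1/2$.

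Next I would invoke that $(X_{T,K},\|\cdot\|_\infty)$ is a complete metric space, as already noted in the Set-up subsection, since it is a closed subset of the Banach space $C([t_0,t_0+T];\mathbb{R}^{2n})$. Banach's fixed-point theorem then yields a unique $\Phi = (y,w) \in X_{T,K}$ with $P\Phi = \Phi$. Evaluating the integral representation \eqref{eq:map_P} at $t = t_0$ gives $y(t_0) = y_0$ and $w(t_0) = w_0$, so the prescribed initial condition is automatically attained. The fixed point equation $P\Phi = \Phi$ is precisely the integral system \eqref{eq:yw}, so $(y,w)$ is by definition a mild solution.

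To conclude that $y$ is a mild solution of the original Maxey--Riley equation \eqref{eq:MR_dimless0}, I would recall that \eqref{eq:MR_compact} is obtained from \eqref{eq:MR_dimless0} by the change of variables \eqref{eq:w}; since \eqref{eq:yw} is in turn the integrated form of \eqref{eq:MR_compact}, a solution of \eqref{eq:yw} translates back into a $y$ satisfying the integral (mild) form of \eqref{eq:MR_dimless0}. One small technical point I would address is that $y(t)$ must remain in $\mathcal{D}$ so that $A_u, B_u, M_u$ are defined along the solution; but since $y$ is continuous, $y(t_0) = y_0 \in \mathcal{D}$ and $\mathcal{D}$ is open, this is automatic after further shrinking $\delta$ if necessary.

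Uniqueness within $X_{T,K}$ is delivered directly by Banach's theorem; to upgrade to uniqueness among \emph{all} continuous solutions of \eqref{eq:yw} on $[t_0,t_0+T]$ one uses a standard argument: any other continuous solution $\widetilde{\Phi}$ with the same initial data is bounded on the compact interval, and comparing $\Phi$ and $\widetilde{\Phi}$ through the same estimate used in Lemma \ref{lemma: contraction} on a possibly smaller interval forces $\Phi = \widetilde{\Phi}$ there, after which a standard continuation argument extends the equality to $[t_0,t_0+T]$. Since every step is either a citation of a previously proved lemma or a standard manipulation, I do not expect a genuine obstacle here; the only thing requiring mild care is the bookkeeping of the various smallness conditions on $\delta$.
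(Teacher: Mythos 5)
Your proposal is correct and follows essentially the same route as the paper: the theorem is deduced by applying the Banach fixed-point theorem to $P$ on the complete metric space $X_{T,K}$, using Lemma \ref{lemma:selfMapping} and Lemma \ref{lemma: contraction}. The extra points you raise (keeping $y(t)$ inside $\mathcal{D}$ and upgrading uniqueness from $X_{T,K}$ to all continuous solutions) are details the paper leaves implicit, and your treatment of them is sound.
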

\begin{proof}
By Lemma \ref{lemma: contraction}, for any $y_{0}\in\mathcal{D}$
and $w_{0}\in\mathbb{R}^{n}$, there exist $K>0$ and $\delta>0$
such that for any $T\in[0,\delta]$, the map $P:X_{T,K}\rightarrow X_{T,K}$
is a contraction on the complete metric space $X_{T,K}$. As a consequence,
the mapping $P$ has a unique fixed point $\left(y,w\right):[t_0,t_0+\delta]\rightarrow\mathcal{D\mathcal{\times\mathbb{R}}}^{n}$.
By the definition of $P$, this fixed point solves the integral equation
\eqref{eq:yw}, and hence is a mild solution of \eqref{eq:MR_compact},
or equivalently, of \eqref{eq:MR_dimless0}.\end{proof}
\begin{rem}
The solution $y(t)$ is, in general, not a strong solution of \eqref{eq:MR_dimless0},
because it is only once continuously differentiable at $t=t_0$, and
hence only satisfies the integrated form of $w(t)$. The following example demonstrates the lack of existence of strong solutions in a simple case where the $w$-equation in \eqref{eq:MR_compact} can be solved explicitly.
\end{rem}

\begin{example}
For a uniform fluid at rest (i.e., $u\equiv 0$), if we neglect the effect of gravity (i.e., set $g=0$), $A_u$, $B_u$ and $M_u$ in equation \eqref{eq:MR_compact} vanish. Then, the equation for $w$ reduces to 
$$\dot w+\kappa\mu^{1/2}\frac{\id}{\id t}\int_{0}^{t}\frac{w(s)}{\sqrt{t-s}}\id s+\mu w=0,$$
with the initial time $t_0=0$ and an arbitrary initial condition $w(0)=w_0$. Taking the Laplace transform of this equation, we obtain
$$W(p)=\frac{1}{p+Gp^{1/2}+\mu}w_0,$$
where $G=\sqrt{9R\mu/2}$ and $W$ denotes the Laplace transform of $w$. For $R< 8/9$, the inverse Laplace transform yields the exact solution
$$w(t)=w_0\Big\{\e^{-\alpha t}\cos(\beta t)+\frac{G^2}{2\beta}\e^{-\alpha t}\sin(\beta t)-\frac{G}{\sqrt{\pi}}\int_{0}^{t}\frac{\e^{-\alpha s}\cos(\beta s)-(\alpha/\beta)\e^{-\alpha s}\sin(\beta s)}{\sqrt{t-s}}\id s\Big\},$$
with $\alpha=\mu(1-9R/4)$ and $\beta=G\sqrt{\mu(1-9R/8)}$. Defining
$$c(s)=\e^{-\alpha s}\cos(\beta s)-(\alpha/\beta)\e^{-\alpha s}\sin(\beta s),$$
and taking the derivative of $w$ with respect to time $t$, we obtain
$$\dot w(t)=w_0\Big\{\left(\frac{G^2}{2}-\alpha\right)\e^{-\alpha t}\cos(\beta t)-\left(\frac{\alpha G^2}{2\beta}+\beta\right)\e^{-\alpha t}\sin(\beta t)-\frac{G}{\sqrt{\pi}}\int_{0}^{t}\frac{\dot c(s)}{\sqrt{t-s}}\id s -\frac{G}{\sqrt{\pi t}} \Big\}.$$

For any $T>0$, the first three terms in $\dot w$ are continuous over the time interval $[0,T]$. The last term $\frac{G}{\sqrt{\pi t}}$, however, is discontinuous at $t=0$. This concludes our example showing that, in general, the MR equation with non-zero initial condition $w_0$ only admits mild solutions.$\ \ \blacksquare$
\end{example}

As mentioned in the Introduction, the original form of the MR equation
\cite{MR} assumes the initial velocity $w(t_0)=0$. This assumption
is mathematically convenient, as it removes the unbounded term from
\eqref{eq:MR_dimless0}. Physically, however, the assumption is artificial,
and cannot be enforced at the release of an inertial particle. 

Nevertheless, $w(t_0)=0$ has been routinely assumed in various studies
of the MR equation (see, e.g., \citet{IP_babiano,IP_candelier,IP_tel})
as an important special case. We now show that under this special
assumption, the MR equation in fact has strong solutions.
\begin{thm}
Assume that (H1) holds. Then for any $y_{0}\in\mathcal{D}$, there
exists $\delta>0$ such that, over the time interval $[t_0,t_0+\delta]$,
the Maxey-Riley equation \eqref{eq:MR_dimless0} has a unique solution
satisfying $y(t_0)=y_{0}$ and $w(t_0)=0$. \label{thm:ExUnStrong} \end{thm}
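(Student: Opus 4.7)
The plan is to promote the mild solution delivered by Theorem~\ref{thm:Local-existence} to a strong solution, exploiting that the singular boundary term $w(t_0)/\sqrt{t-t_0}$ in the Riemann--Liouville identity \eqref{eq:RLid} vanishes when $w(t_0)=0$. Under this hypothesis, if I can show $w\in C^1([t_0,t_0+\delta])$, then \eqref{eq:RLid} converts the integral form \eqref{eq:yw} back into the differential form \eqref{eq:MR_compact} at every $t\in[t_0,t_0+\delta]$, including the initial time.

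First I would apply Theorem~\ref{thm:Local-existence} with $w_0=0$ to obtain the unique mild solution $(y,w)$ on $[t_0,t_0+\delta]$. The $y$-equation in \eqref{eq:yw} already gives $y\in C^1$ with $\dot y=w+A_u(y,\cdot)$, so only the regularity of $w$ is in question. Treating $(y,w)$ as a known continuous pair, I would set
$$h(t):=-\bigl(\mu+M_u(y(t),t)\bigr)w(t)+B_u(y(t),t),$$
which is continuous by (H1), and study the auxiliary linear Volterra equation
$$v(t)=h(t)-\kappa\mu^{1/2}\int_{t_0}^t\frac{v(s)}{\sqrt{t-s}}\,\id s.$$
Since this equation is linear in $v$ with continuous forcing and an $L^1$ singular kernel, a short contraction-mapping argument in $C([t_0,t_0+\delta'];\mathbb{R}^n)$, in the spirit of Lemmas~\ref{lemma:selfMapping}--\ref{lemma: contraction} but simplified by linearity, produces a unique continuous solution $v$ on a possibly shrunk interval $[t_0,t_0+\delta']$.

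The crucial step is to identify $v$ with $\dot w$. I would set $W(t):=\int_{t_0}^t v(s)\,\id s$, so $W\in C^1$ with $W(t_0)=0$ and $\dot W=v$. Integrating the Volterra equation and applying Fubini on the simplex $\{t_0\leq r\leq s\leq t\}$ gives
$$\int_{t_0}^t\int_{t_0}^s\frac{v(r)}{\sqrt{s-r}}\,\id r\,\id s=\int_{t_0}^t v(r)\,2\sqrt{t-r}\,\id r=\int_{t_0}^t\frac{W(s)}{\sqrt{t-s}}\,\id s,$$
whence
$$W(t)=\int_{t_0}^t h(s)\,\id s-\kappa\mu^{1/2}\int_{t_0}^t\frac{W(s)}{\sqrt{t-s}}\,\id s.$$
This is precisely the mild equation \eqref{eq:yw} for $w$ driven by the same $h$, so $\phi:=W-w$ solves the homogeneous Abel-type equation $\phi(t)=-\kappa\mu^{1/2}\int_{t_0}^t\phi(s)/\sqrt{t-s}\,\id s$. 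A further short-interval contraction (as in Lemma~\ref{lemma: contraction}) forces $\phi\equiv 0$, hence $W\equiv w$. Therefore $w\in C^1$ with $\dot w=v$, and substituting back into the Volterra equation together with \eqref{eq:RLid}, whose boundary term vanishes by $w(t_0)=0$, yields \eqref{eq:MR_compact} in the strong sense; uniqueness of the strong solution follows from uniqueness of the mild one.

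I expect the identification step $W=w$ to be the main obstacle: the Volterra equation only produces a candidate derivative $v$, and reconciling it with the actual mild solution requires the Fubini computation above. This computation succeeds only because $w(t_0)=0$ eliminates the boundary contribution that would otherwise break the equivalence between the integrated Volterra equation for $v$ and the mild equation \eqref{eq:yw} for $w$.
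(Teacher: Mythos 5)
Your argument is correct, but it takes a genuinely different route from the paper's. The paper does not bootstrap from the mild solution of Theorem~\ref{thm:Local-existence}: in Appendix~\ref{app:proofStrongSol} it makes the ansatz $y=y_0+\int_{t_0}^t\phi(s)\,\id s$, $w=\int_{t_0}^t\psi(s)\,\id s$ and sets up a \emph{fresh nonlinear} fixed-point problem \eqref{eq:map_P2} for the derivative pair $(\phi,\psi)$ itself, re-proving self-mapping and contraction on $X_{T,K}$ with $K=4L_b$ (Lemmas~\ref{lemma:selfMapping2} and \ref{lem:contraction2}). You instead reuse Theorem~\ref{thm:Local-existence}, freeze the nonlinearity into the known continuous forcing $h$, and reduce the remaining work to a \emph{linear} singular Volterra equation for the candidate derivative $v$, followed by the identification $\int_{t_0}^t v=w$ via Fubini and uniqueness for the homogeneous Abel equation; your Fubini computation is correct, since both iterated integrals equal $\int_{t_0}^t 2\sqrt{t-r}\,v(r)\,\id r$. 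The paper's construction is self-contained and delivers $(\dot y,\dot w)$ directly, at the cost of repeating the contraction estimates for a new nonlinear map; yours is more economical (the nonlinear work is already done, and the affine Volterra map contracts on all of $C([t_0,t_0+\delta'];\mathbb{R}^n)$ without needing the ball $X_{T,K}$), but it must pay for the identification step $W=w$. Your derivation of uniqueness from uniqueness of the mild solution is also slightly cleaner than the paper's, which obtains uniqueness only within the class of solutions of the form \eqref{eq:formal_Sol}. Two minor points worth making explicit: the conclusion $\phi\equiv 0$ for the homogeneous Abel equation holds immediately only on an interval with $2\kappa\mu^{1/2}\sqrt{\delta'}<1$ (or after a standard continuation argument), which is harmless since the statement is local; and once $w\in C^1$ you should note that $\dot y=w+A_u(y,\cdot)$ is then $C^1$ in $t$, so $y\in C^2$ and the second-order form \eqref{eq:MR_dimless0} is indeed satisfied classically.
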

\begin{proof}
See Appendix \ref{app:proofStrongSol}.
\end{proof}

\subsection{Regularity of solutions}

Here we show the differentiability of the solutions of \eqref{eq:MR_compact}
with respect to the initial condition $(y_{0},w_{0})$. Assume that
a solution $(y(t),w(t))$ is differentiable at $(y_{0},w_{0})$ and
denote the derivative of $y$ and $w$ with respect to $(y_{0},w_{0})$
by $Dy$ and $Dw$, respectively. 

Differentiating \eqref{eq:MR_compact} formally and integrating in
time, we obtain that $Dy,Dw:\mathbb{R}^{+}\rightarrow\mathbb{R}^{n\times2n}$
must satisfy 
\begin{align}
Dy(t)= & \big(I_{n}|O_{n}\big)+\int_{t_0}^{t}\left[Dw(s)+\nabla A_{u}(y(s),s)Dy(s)\right]\id s,\nonumber \\
Dw(t)= & \big(O_{n}|I_{n}\big)+\int_{t_0}^{t}\Big[-\mu Dw(s)-\mathcal{L}(y(s),w(s),s)Dy(s)-M_{u}(y(s),s)Dw(s)\nonumber \\
 & \ \ \ \ \ \ \ \ \ \ \ \ \ \ \ \ \ \ \ \ \ \ -\kappa\mu^{1/2}\frac{Dw(s)}{\sqrt{t-s}}+\nabla B_{u}(y(s),s)Dy(s)\Big]\id s,\label{eq:mapDPhi}
\end{align}
where $\mathcal{L}$ denotes the $n\times n$ matrix given by 
\[
\mathcal{L}_{ij}(y(s),w(s),s)=\sum_{k}\frac{\partial M_{ik}}{\partial y_{j}}\Big|_{(y(s),s)}w_{k}(s).
\]
The matrices $I_{n}$ and $O_{n}$ denote the identity and null matrices
on $\mathbb{R}^{n\times n}$.

The differentiability of the solution $(y,w)$ with respect to the
initial condition $(y_{0},w_{0})$, therefore, is equivalent to the
existence and uniqueness of solutions to the equations \eqref{eq:mapDPhi}.
We show that under further regularity assumptions on the fluid velocity
$u$, a unique solution to these equations does exist. In particular,
we need the following assumption: 
\begin{description}
\item [{(H2)}] The velocity field $u(x,t)$ is four times continuously
differentiable in its arguments over the domain $\mathcal{D}\times\mathbb{R}^{+}$.
Its partial derivatives (including mixed partials) are uniformly bounded
and Lipschitz continuous up to order three. \label{ass:C1regularity} \end{description}
\begin{thm}
Assume that (H2) holds. Then for any $y_{0}\in\mathcal{D}$ and $w_{0}\in\mathbb{R}^{n}$,
there exists $\delta>0$ small enough such that, a unique mild solution
$(y(t),w(t))$ of \eqref{eq:MR_compact} exists over the time interval
$[t_0,t_0+\delta]$, and is continuously differentiable with respect to
its initial condition $(y_{0},w_{0})$. \end{thm}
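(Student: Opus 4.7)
The plan is to take existence and uniqueness of the mild solution $(y,w)$ for granted from Theorem~\ref{thm:Local-existence} (since (H2) implies (H1)), and then to establish continuous differentiability with respect to $(y_0,w_0)$ by first solving the variational system \eqref{eq:mapDPhi} as a linear fixed-point problem, then identifying its solution with the genuine Fr\'echet derivative of the solution map, and finally showing that this fixed point depends continuously on $(y_0,w_0)$. The hardest step is the middle one, because the singular memory kernel forces the classical Gronwall argument to be replaced by a weakly-singular variant, and the Taylor remainders for the nonlinear term $M_u(y)w$ have to be handled without exceeding the smoothness supplied by (H2).

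\textbf{Solving the variational equations.} For the already-constructed $(y(\cdot),w(\cdot))$, the right-hand sides of \eqref{eq:mapDPhi} are \emph{linear} in $(Dy,Dw)$. Assumption (H2) upgrades (H1): since $u$ is $C^4$ with bounded first three derivatives, the fields $A_u,B_u,M_u$ are $C^1$ in $y$, and $\nabla A_u,\nabla B_u, M_u$ together with $\mathcal{L}(y,w,s)$ are uniformly bounded on every bounded set in $(y,w)$ space and in $s\in[t_0,t_0+T]$. This recasts \eqref{eq:mapDPhi} as a fixed-point problem for a map $Q$ on the complete metric space of continuous, uniformly bounded $\mathbb{R}^{n\times 2n}$-valued functions on $[t_0,t_0+T]$, equipped with the supremum norm, analogous to $X_{T,K}$. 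The self-map and contraction estimates mirror those of Lemmas~\ref{lemma:selfMapping} and \ref{lemma: contraction} almost verbatim, with the singular-kernel contribution $\int_{t_0}^{t}|Dw(s)|/\sqrt{t-s}\,\id s$ controlled once again by $2\kappa\mu^{1/2}\sqrt{T}\,\|Dw\|_\infty$, and with linearity making the contraction step slightly easier. Shrinking $\delta$ if necessary produces a unique fixed point $(Dy,Dw)$.

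\textbf{Identifying the fixed point with the derivative.} For a small perturbation $h=(h_1,h_2)\in\mathbb{R}^{2n}$, let $(y^h,w^h)$ denote the mild solution with initial condition $(y_0,w_0)+h$ furnished by Theorem~\ref{thm:Local-existence}, and set
\[
r(t) \;=\; \bigl(y^h(t)-y(t),\,w^h(t)-w(t)\bigr) - (Dy(t),Dw(t))\,h.
\]
Subtracting the integral equation \eqref{eq:yw} for $(y^h,w^h)$ from that for $(y,w)$, subtracting the action of \eqref{eq:mapDPhi} on $h$, and performing a first-order Taylor expansion of $A_u$, $B_u$, and $M_u(y)w$ around $y(s)$, I expect an estimate of the form
\[
|r(t)| \;\le\; C\int_{t_0}^{t}\Bigl(1+\frac{1}{\sqrt{t-s}}\Bigr)|r(s)|\,\id s \;+\; \rho(|h|)\,|h|,
\]
where $\rho(|h|)\to 0$ as $|h|\to 0$. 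The remainder is bounded using the uniform continuity of $\nabla A_u, \nabla B_u, \nabla M_u$ on the compact set traced out by $(y,y^h)$ (provided by (H2)) together with the a priori estimate $\|y^h-y\|_\infty+\|w^h-w\|_\infty = O(|h|)$, a standard parameter-dependence consequence of the contraction property. A generalized Gronwall inequality for weakly singular kernels then yields $\|r\|_\infty = o(|h|)$, which is precisely the statement that $(Dy(t),Dw(t))$ is the Fr\'echet derivative of $(y(t),w(t))$ with respect to $(y_0,w_0)$. The product $M_u(y)w$ requires particular care in this Taylor step, since it is nonlinear in $y$ even though linear in $w$; this is where the additional derivative of $u$ in (H2) relative to (H1) is consumed.

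\textbf{Continuity of the derivative.} The map $Q$ from the first step depends on $(y_0,w_0)$ only through the solution data $(y(\cdot),w(\cdot))$, which themselves depend continuously on $(y_0,w_0)$ in the supremum norm (again by parameter-dependence for contractions). Since the contraction constant of $Q$ can be chosen uniformly in $(y_0,w_0)$ on a small ball, its unique fixed point $(Dy,Dw)$ depends continuously on $(y_0,w_0)$ in $\|\cdot\|_\infty$, and in particular pointwise in $t$. This establishes continuous differentiability of $(y(t),w(t))$ with respect to $(y_0,w_0)$ and completes the plan.
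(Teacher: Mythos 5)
Your proposal is correct, and its core — recasting the variational system \eqref{eq:mapDPhi} as a linear fixed-point problem on $C([t_0,t_0+\delta];\mathbb{R}^{2n\times 2n})$ and contracting exactly as in Lemma \ref{lemma: contraction} — is precisely what the paper does. The difference is one of completeness rather than of route: the paper's proof consists \emph{only} of that first step, asserting beforehand that differentiability of $(y,w)$ in $(y_0,w_0)$ is ``equivalent to'' solvability of \eqref{eq:mapDPhi}, whereas you explicitly supply the two missing links: (i) the identification of the fixed point $(Dy,Dw)$ with the Fr\'echet derivative, via a first-order Taylor expansion of $A_u$, $B_u$ and $M_u(y)w$, the a priori bound $\|y^h-y\|_\infty+\|w^h-w\|_\infty=O(|h|)$ from the uniform contraction, and a Gronwall-type absorption of the weakly singular kernel (for $\delta$ small one can simply absorb $C\int_{t_0}^{t}(1+(t-s)^{-1/2})|r(s)|\,\id s\le \tfrac12\|r\|_\infty$ and avoid the generalized Gronwall lemma altogether); and (ii) the continuity of $(Dy,Dw)$ in $(y_0,w_0)$, which the theorem statement requires but the paper never addresses. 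Your observation about where the extra derivative in (H2) is consumed (uniform continuity of $\nabla M_u$, which involves fourth derivatives of $u$, on the compact set swept by the trajectories) is also the correct accounting. So the proposal is a strictly more rigorous rendering of the paper's argument rather than an alternative to it.
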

\begin{proof}
Note that the map $\mathcal{P}$ defined by the right hand side of
\eqref{eq:mapDPhi} is linear in $Dy$ and $Dw$. It follows from
assumption (H2) that $\mathcal{P}$ maps $C([t_0,t_0+\delta];\mathbb{R}^{2n\times2n})$
into itself for any $\delta\in\mathbb{R}^{+}$. Furthermore, for $\delta>0$
small enough, the map $\mathcal{P}$ is a contraction $C([t_0,t_0+\delta];\mathbb{R}^{2n\times2n})$
by an argument similar to Lemma \eqref{lemma: contraction} (omitted
here for brevity). Therefore, there are unique derivatives $Dy,Dw:[t_0,t_0+\delta]\rightarrow\mathbb{R}^{n\times2n}$
that belong to the function space $C([t_0,t_0+\delta];\mathbb{R}^{n\times2n})$
and solve equations \eqref{eq:mapDPhi}. \end{proof}
\begin{rem}
For the special case $w(t_0)=0$, one can similarly show that the strong
solution $(y(t),w(t))$ is differentiable with respect to the initial
position $y_{0}$. 
\end{rem}

\section{Conclusion}

We have proved the local existence and uniqueness of solutions of
the Maxey--Riley (MR) equation. In the most general case, the solutions
exist only in a weak sense. This is consistent with the physics of
the problem, because an initial velocity mismatch between the ambient
fluid and the particle creates a vorticity layer around the particle
with high drag. This drag force is modeled in the MR equation by a
term proportional to $1/\sqrt{t-t_0}$, which is singular but integrable.
As a result, the solution of the MR equation is continuous but only
differentiable for $t>t_0$.

In theoretical and numerical investigations of the MR equation, it
is routinely assumed that the relative velocity term $w(t)$ is chosen
in a way that eliminates the infinitely large force at time $t=t_0$.
We have shown that under this assumption, a unique strong solution
exists to the MR equation. Moreover, both the mild and the strong
solutions are differentiable with respect to their initial conditions.

Remaining challenges for the MR equations include global existence
and uniqueness and an asymptotic analysis of the solutions, at least
for small inertial particles.

\section*{Acknowledgment}

We are grateful to Tibor Krisztin for his insights on the Maxey--Riley
equations, and for pointing out Ref. \cite{Burton2013}. We would also like to acknowledge  useful conversations with Martin Maxey and Tam\'as T\'el on  the Maxey-Riley equations. We are also thankful to Daniel Karrasch for fruitful discussions on an early draft of this manuscript. 

\begin{appendices}
\section{Proof of Theorem \ref{thm:ExUnStrong}}\label{app:proofStrongSol} 
First, we slightly reformulate the MR
equation. If continuously differentiable solutions to equation \eqref{eq:MR_compact}
exist, then the integral term in the equation can be re-written as
\[
\frac{\id}{\id t}\int_{t_0}^{t}\frac{w(s)}{\sqrt{t-s}}\id s=\int_{t_0}^{t}\frac{\dot{w}(s)}{\sqrt{t-s}}\id s,
\]
since $w(t_0)=0$. As a consequence, the MR equation \eqref{eq:MR_compact}
can be written as 
\begin{align}
\dot{y}= & \; w+A_{u}(y,t),\nonumber \\
\dot{w}= & \,-\mu\, w-M_{u}(y,t)w-\kappa\mu^{1/2}\int_{t_0}^{t}\frac{\dot{w}(s)}{\sqrt{t-s}}\id s+B_{u}(y,t).\label{eq:MR_compact2}
\end{align}
 Now, we would like to show that this latter equation, in fact, admits
continuously differentiable solutions satisfying $y(t_0)=y_{0}$ and
and $w(t_0)=0$. Our proof will differ from the proof of Theorem \ref{thm:Local-existence}.
The main ideas follow those of \citet{Burton2012}, although the details
are quite different. In particular, the results of \cite{Burton2012}
do not apply in our context.

We need to show that there are unique bounded continuous functions
$\phi,\psi:\mathcal{[}t_0,t_0+T]\rightarrow\mathbb{R}^{n}$ such that the
functions 
\[
y(t)=y_{0}+\int_{t_0}^{t}\phi(s)\id s,
\]
\begin{equation}
w(t)=\int_{t_0}^{t}\psi(s)\id s,\label{eq:formal_Sol}
\end{equation}
solve equation (\ref{eq:MR_compact2}). For notational simplicity,
we omit the dependence of $y$, $w$, $\phi$ and $\psi$ on the initial
condition $y_{0}$.

Substituting $y(t)$ and $w(t)$ in (\ref{eq:MR_compact2}), we obtain
\begin{align}
\phi(t)= & \;\int_{t_0}^{t}\psi(s)\id s+A_{u}\left(y_{0}+\int_{t_0}^{t}\phi(s)\id s,t\right),\nonumber\\
\psi(t)= & \,-\mu\,\int_{t_0}^{t}\psi(s)\id s-M_{u}\left(y_{0}+\int_{t_0}^{t}\phi(s)\id s,t\right)\int_{t_0}^{t}\psi(s)\id s\nonumber \\
 & -\kappa\mu^{1/2}\int_{t_0}^{t}\frac{\psi(s)}{\sqrt{t-s}}\id s+B_{u}\left(y_{0}+\int_{t_0}^{t}\phi(s)\id s,t\right).
\end{align}

The right-hand sides of these equations define a mapping $P$ as 
\begin{equation}
(P\Phi)(t)=\begin{pmatrix}\int_{t_0}^{t}\psi(s)\id s+A_{u}(y(t),t)\\
\ \\
-\int_{t_0}^{t}\left[\mu+\frac{\kappa\mu^{1/2}}{\sqrt{t-s}}+M_{u}(y(t),t)\right]\psi(s)\id s+B_{u}(y(t),t)
\end{pmatrix},\label{eq:map_P2}
\end{equation}
where $\Phi=(\phi,\psi)\in\mathbb{R}^{2n}$ and $y(t)=y_{0}+\int_{t_0}^{t}\phi(s)\id s$.

We will show that the mapping $P$ has a unique fixed point in $X_{T,K}$
for some $T,K>0$. Then the existence of the above mentioned solution
of (\ref{eq:MR_compact2}) follows directly.

The following lemma shows that for an appropriate choice of $T$ and
$K$, $P$ maps $X_{T,K}$ into itself.
\begin{lem}
Assume that (H1) holds. Then for $K\geq 4L_{b}$ and any $y_{0}\in\mathcal{D}$,
there exists $\delta>0$ such that, for any $T\in[0,\delta]$, we have
$P:X_{T,K}\rightarrow X_{T,K}$. \label{lemma:selfMapping2} \end{lem}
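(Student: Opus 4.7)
My plan is to mimic the structure of the proof of Lemma \ref{lemma:selfMapping}, noting carefully what is different here. The key structural observation is that in the reformulated map \eqref{eq:map_P2} the initial data $y_0$ and $w_0$ no longer appear as explicit constant terms on the right-hand side. Their role is now played by $A_u(y(t),t)$ and $B_u(y(t),t)$, both of which are \emph{uniformly} bounded by the constant $L_b$ provided by (H1). This is precisely why the sufficient condition on $K$ takes the form $K \geq 4L_b$, independent of $y_0$, in contrast to the $y_0$-dependent choice $K = 4\max\{|y_0|,|w_0|\}$ used before.

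First I would appeal to (H1) to extract the uniform bound $\|A_u\|_\infty, \|B_u\|_\infty, \|M_u\|_\infty \leq L_b$, exactly as in Lemma \ref{lemma:selfMapping}. Then, for $\Phi = (\phi,\psi) \in X_{T,K}$, I would estimate $|(P\Phi)(t)|$ by the sum of the Euclidean norms of its two components, using $|\psi(s)| \leq \|\psi\|_\infty \leq K$ inside each integrand. The first component contributes at most $(t-t_0)\|\psi\|_\infty + L_b$, and the second at most $\bigl[(t-t_0)(\mu + L_b) + 2\kappa\mu^{1/2}\sqrt{t-t_0}\bigr]\|\psi\|_\infty + L_b$. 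Adding these yields
\[
|(P\Phi)(t)| \leq 2L_b + K\bigl[(t-t_0)(1+\mu+L_b) + 2\kappa\mu^{1/2}\sqrt{t-t_0}\bigr].
\]
I would then choose $\delta > 0$ small enough that the bracket is $\leq 1/2$ on $[0,\delta]$, and invoke the hypothesis $K \geq 4L_b$ (so that $2L_b \leq K/2$) to conclude $\|P\Phi\|_\infty \leq K$.

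The final step is the continuity of $P\Phi : [t_0,t_0+T] \to \mathbb{R}^{2n}$. Since $\phi$ is continuous, so is $y(t) = y_0 + \int_{t_0}^t \phi(s)\id s$, and hence $A_u(y(t),t)$, $B_u(y(t),t)$ and $M_u(y(t),t)$ depend continuously on $t$ by (H1). Non-singular Riemann integrals depend continuously on their upper limit, so the only delicate point is the weakly singular integral $t \mapsto \int_{t_0}^t \psi(s)/\sqrt{t-s}\,\id s$. After the substitution $u = t-s$, a standard Abel-type splitting (one piece controlled by the uniform continuity of $\psi$, another on the shrinking interval $[t-t_0,t'-t_0]$ where $u^{-1/2}$ is integrable at zero) shows that this integral is continuous in $t$. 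This Abel-type estimate is the only point where even a minor obstacle arises; otherwise the argument is bookkeeping, and the uniformity in $y_0 \in \mathcal{D}$ is automatic because the bounded forcing $A_u, B_u$ has taken the place of the arbitrarily large initial data.
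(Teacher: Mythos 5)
Your proposal is correct and follows essentially the same route as the paper's own proof: the identical term-by-term bound $|P\Phi(t)|\leq 2L_b+\|\psi\|_\infty\big[(t-t_0)(1+\mu+L_b)+2\kappa\mu^{1/2}\sqrt{t-t_0}\big]$, the same choice of $\delta$ making the bracket at most $1/2$, and the same use of $K\geq 4L_b$ to absorb the $2L_b$ term. The only difference is that you spell out the Abel-type continuity argument for the weakly singular integral, which the paper merely asserts (and sketches earlier in Lemma \ref{lemma:selfMapping}).
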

\begin{proof}
The continuity of $P\Phi:\mathbb{R}^{+}\rightarrow\mathbb{R}^{2n}$
follows from assumption (H1). We also have 
\begin{align*}
|P\Phi(t)|\leq & \|\int_{t_0}^{t}\psi(s)\id s+A_{u}(y(t),t)\|_{\infty}\\
 & +\|-\int_{t_0}^{t}\left[\mu+\frac{\kappa\mu^{1/2}}{\sqrt{t-s}}+M_{u}(y(t),t)\right]\psi(s)\id s+B_{u}(y(t),t)\|_{\infty}\\
\leq & \|\psi\|_{\infty}\left((t-t_0)+\mu (t-t_0)+2\kappa\mu^{1/2}\sqrt{t-t_0}+L_{b}(t-t_0)\right)+2L_{b}\\
\leq & \|\Phi\|_{\infty}\left((t-t_0)+\mu (t-t_0)+2\kappa\mu^{1/2}\sqrt{t-t_0}+L_{b}(t-t_0)\right)+2L_{b}\\
\leq & K\left((t-t_0)+\mu (t-t_0)+2\kappa\mu^{1/2}\sqrt{t-t_0}+L_{b}(t-t_0)\right)+\frac{K}{2}\\
\end{align*}
If $\delta>0$ is small enough such that $\delta+\mu \delta+2\kappa\mu^{1/2}\sqrt{\delta}+L_{b}\delta\leq 1/2$, we have $\|P\Phi\|_{\infty}\leq K$; and
hence $P\Phi\in X_{T,K}$ for any $T\in[0,\delta]$. 
\end{proof}
We now fix the constant $K=4L_{b}$ in the following. We show that
the map $P$ is a contraction mapping on the space $X_{T,K}$.
\begin{lem}
There is $\delta>0$ such that, for any $T\in[0,\delta]$ and $\Phi_{1},\Phi_{2}\in X_{T,K}$,
\[
\|P\Phi_{1}-P\Phi_{2}\|_{\infty}\leq\frac{1}{2}\|\Phi_{1}-\Phi_{2}\|_{\infty}
\]
\label{lem:contraction2} 
\end{lem}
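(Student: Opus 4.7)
My plan is to mimic the strategy of Lemma \ref{lemma: contraction}, bounding $|(P\Phi_1)(t) - (P\Phi_2)(t)|$ componentwise for $\Phi_i = (\phi_i,\psi_i) \in X_{T,K}$ and showing that every coefficient vanishes as $T \to 0$. The crucial new ingredient here is that $y_i(t) = y_0 + \int_{t_0}^t \phi_i(s)\id s$ depends on $\Phi_i$ only through the integral of $\phi_i$, so that
\[
|y_1(t) - y_2(t)| \leq (t - t_0)\,\|\phi_1 - \phi_2\|_{\infty}.
\]
Every Lipschitz estimate of $A_u$, $B_u$, or $M_u$ along $y_1$ versus $y_2$ therefore picks up an extra factor of $(t-t_0)$ compared with Lemma \ref{lemma: contraction}, which only helps.

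For the first component of $P\Phi_1 - P\Phi_2$, I would combine the trivial bound $|\int_{t_0}^t(\psi_1-\psi_2)\id s| \leq (t-t_0)\|\psi_1-\psi_2\|_{\infty}$ with the Lipschitz continuity of $A_u$, obtaining a contribution of the form $[(t-t_0) + L_c(t-t_0)]\,\|\Phi_1-\Phi_2\|_{\infty}$. For the second component, the linear-in-$\psi$ terms $\mu\int_{t_0}^t(\psi_1-\psi_2)\id s$ and the Basset integral $\int_{t_0}^t(\psi_1-\psi_2)/\sqrt{t-s}\id s$ are handled exactly as before, yielding $[\mu(t-t_0) + 2\kappa\mu^{1/2}\sqrt{t-t_0}]\,\|\psi_1-\psi_2\|_{\infty}$, and the $B_u$ difference contributes $L_c(t-t_0)\,\|\phi_1-\phi_2\|_{\infty}$. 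The only delicate term is the $M_u$ contribution, which I would resolve by the standard add-and-subtract splitting
\[
M_u(y_1(t),t)\int_{t_0}^t\psi_1\id s - M_u(y_2(t),t)\int_{t_0}^t\psi_2\id s = M_u(y_1(t),t)\int_{t_0}^t(\psi_1-\psi_2)\id s + \bigl[M_u(y_1(t),t) - M_u(y_2(t),t)\bigr]\int_{t_0}^t\psi_2\id s,
\]
bounding the first summand with $\|M_u\|_{\infty}\leq L_b$ and the second with the Lipschitz bound on $M_u$ together with $\|\psi_2\|_{\infty}\leq K$.

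Assembling the pieces yields $|(P\Phi_1)(t) - (P\Phi_2)(t)| \leq C(T)\,\|\Phi_1-\Phi_2\|_{\infty}$, where $C(T)$ is a finite sum of terms involving positive powers of $(t-t_0)$ and $\sqrt{t-t_0}$ (the worst being an $O(T^2)$ contribution from the Lipschitz-$M_u$ piece multiplied by $\|\psi_2\|_{\infty}\leq K$). Since $K = 4L_b$ is fixed, shrinking $\delta$ so that $C(\delta)\leq 1/2$ yields the claim. The main obstacle, as in Lemma \ref{lemma: contraction}, is the bilinear $M_u$ term, which is not Lipschitz in $(\phi,\psi)$ with a uniform constant; the splitting trick above resolves it, provided one exploits both $\|\psi_2\|_{\infty}\leq K$ and the Lipschitz continuity of $M_u$. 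I note that, in contrast to Lemma \ref{lemma: contraction}, the evaluation of $M_u$ at $y(t)$ rather than at $y(s)$ in \eqref{eq:map_P2} actually simplifies the analysis, since $M_u(y(t),t)$ can be pulled outside the $s$-integral.
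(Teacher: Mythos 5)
Your proposal is correct and follows essentially the same route as the paper's proof: the same term-by-term estimates, the same add-and-subtract splitting of the $M_u$ term using $\|M_u\|_\infty\leq L_b$ and the Lipschitz bound together with $\|\psi_2\|_\infty\leq K$, and the same observation that $|y_1(t)-y_2(t)|\leq(t-t_0)\|\phi_1-\phi_2\|_\infty$ produces coefficients that all vanish as $T\to0$. No gaps.
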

\begin{proof}
Let $\Phi_{1},\Phi_{2}\in X_{T,K}$ where $\Phi_{i}=(\phi_{i},\psi_{i})^{\top}$.
We have 
\allowdisplaybreaks{
\begin{align*}
|(P\Phi_{1})(t)-(P\Phi_{2})(t)|\leq & \int_{t_0}^{t}|\psi_{1}(s)-\psi_{2}(s)|\id s+|A_{u}(y_{1}(t),t)-A_{u}(y_{2}(t),t)|\\
 & +\mu\int_{t_0}^{t}|\psi_{1}(s)-\psi_{2}(s)|\id s\\
 & +\kappa\mu^{1/2}\int_{t_0}^{t}\frac{|\psi_{1}(s)-\psi_{2}(s)|}{\sqrt{t-s}}\id s\\
 & +|M_{u}(y_{1}(t),t)\int_{t_0}^{t}\psi_{1}(s)\id s-M_{u}(y_{2}(t),t)\int_{t_0}^{t}\psi_{2}(s)\id s|\\
 & +|B_{u}(y_{1}(t),t)-B_{u}(y_{2}(t),t)|\\
\leq & \int_{t_0}^{t}|\psi_{1}(s)-\psi_{2}(s)|\id s+L_{c}\int_{t_0}^{t}|\phi_{1}(s)-\phi_{2}(s)|\id s\\
 & +\mu\int_{t_0}^{t}|\psi_{1}(s)-\psi_{2}(s)|\id s\\
 & +\kappa\mu^{1/2}\int_{t_0}^{t}\frac{|\psi_{1}(s)-\psi_{2}(s)|}{\sqrt{t-s}}\id s\\
 & +L_{b}\int_{t_0}^{t}|\psi_{1}(s)-\psi_{2}(s)|\id s\\
 & +L_{c}\big((t-t_0)\|\psi_{2}\|_{\infty}\big)\int_{t_0}^{t}|\phi_{1}(s)-\phi_{2}(s)|\id s\\
 & +L_{c}\int_{t_0}^{t}|\phi_{1}(s)-\phi_{2}(s)|\id s,\\
\end{align*}
}
where we have used the Lipschitz continuity of $A_{u}(\cdot,t)$,
$B_{u}(\cdot,t)$ and $M_{u}(\cdot,t)$. We also used the fact that
\begin{align*}
\Big|M_{u}(y_{1},t)\int_{t_0}^{t}\psi_{1}(s)\id s-M_{u}(y_{2},t)\int_{t_0}^{t}\psi_{2}(s)\id s\Big|= & \ \\
\Big|M_{u}(y_{1},t)\int_{t_0}^{t}(\psi_{1}(s)-\psi_{2}(s))\id s+ & \left(M_{u}(y_{1},t)-M_{u}(y_{2},t)\right)\int_{t_0}^{t}\psi_{2}(s)\id s\Big|\\
\leq\|M_{u}(y_{1},t)\|\int_{t_0}^{t}|\psi_{1}(s)-\psi_{2}(s)|\id s+ & \|M_{u}(y_{1},t)-M_{u}(y_{2},t)\|\int_{t_0}^{t}|\psi_{2}(s)|\id s.
\end{align*}
As a result, we obtain 
\begin{align*}
|(P\Phi_{1})(t)-(P\Phi_{2})(t)|\leq & \Big((t-t_0)+\mu (t-t_0)+2\kappa\mu^{1/2}\sqrt{t-t_0}+L_{b}(t-t_0)\Big)\|\psi_{1}-\psi_{2}\|_{\infty}\\
 & +\Big(2L_{c}(t-t_0)+L_{c}K(t-t_0)^{2}\Big)\|\phi_{1}-\phi_{2}\|_{\infty}.
\end{align*}
Therefore, one can take $\delta>0$ small enough such that, for any
$t\in[t_0,t_0+\delta]$, 
\begin{align*}
|(P\Phi_{1})(t)-(P\Phi_{2})(t)|\leq & \frac{1}{4}\left(\|\phi_{1}-\phi_{2}\|_{\infty}+\|\psi_{1}-\psi_{2}\|_{\infty}\right)\\
\leq & \frac{1}{2}\|\Phi_{1}-\Phi_{2}\|_{\infty}.
\end{align*}
Hence we get the contraction property 
\[
\|P\Phi_{1}-P\Phi_{2}\|_{\infty}\leq\frac{1}{2}\|\Phi_{1}-\Phi_{2}\|_{\infty}.
\]
\end{proof}

Since $P$ is a contraction mapping on the complete metric space $X_{T,K}$,
it has a unique fixed point in $X_{T,K}$. Therefore, there are unique
continuous functions $\phi,\psi:[t_0,t_0+\delta]\rightarrow\mathbb{R}^{n}$
such that the functions $y,w$ defined by \eqref{eq:formal_Sol} solve
the MR equation \eqref{eq:MR_compact2} and satisfy $y(t_0)=y_{0}$
and $w(t_0)=0$. This concludes the proof of Theorem \ref{thm:ExUnStrong}. 

\end{appendices}


\end{document}